\numberwithin{equation}{section}
\newtheorem{theorem}{Theorem}
\newtheorem{proposition}[theorem]{Proposition}
\newtheorem{lemma}[theorem]{Lemma}
\newtheorem{corollary}[theorem]{Corollary}
\theoremstyle{remark}
\newtheorem*{remark}{Remark}
\newtheorem*{remarks}{Remarks}
\def\om{\omega}
\newcommand{\dbin}[2]{\left(\kern-0.4em{\binom#1#2}
\kern-0.4em\right)}
\begin{document}
\title{A note on higher-dimensional magic matrices}
\newbox\Aut
\setbox\Aut\vbox{
\centerline{\sc Peter J. Cameron$^\dagger$,
Christian Krattenthaler$^{\ddagger}$, \rm and \sc Thomas W. M\"uller$^\dagger$}
\vskip18pt
\centerline{$^\dagger$ School of Mathematical Sciences,}
\centerline{Queen Mary \& Westfield College, University of London,}
\centerline{Mile End Road, London E1 4NS, United Kingdom.}
\centerline{\footnotesize WWW: \tt http://www.maths.qmw.ac.uk/\lower0.5ex\hbox{\~{}}pjc/}
\centerline{\footnotesize WWW: \tt http://www.maths.qmw.ac.uk/\lower0.5ex\hbox{\~{}}twm/}
\vskip18pt
\centerline{$^\ddagger$ Fakult\"at f\"ur Mathematik, Universit\"at Wien,}
\centerline{Nordbergstra\ss e 15, A-1090 Vienna, Austria.}
\centerline{\footnotesize WWW: \footnotesize\tt
http://www.mat.univie.ac.at/\lower0.5ex\hbox{\~{}}kratt} 
}
\author{\box\Aut}

\address{School of Mathematical Sciences, Queen Mary
\& Westfield College, University of London,
Mile End Road, London E1 4NS, United Kingdom.\newline
WWW: {\tt http://www.maths.qmw.ac.uk/\lower0.5ex\hbox{\~{}}pjc/},
\tt http://www.maths.qmw.ac.uk/\lower0.5ex\hbox{\~{}}twm/.}

\address{Fakult\"at f\"ur Mathematik, Universit\"at Wien,
Nordbergstra{\ss}e~15, A-1090 Vienna, Austria.
WWW: \tt http://www.mat.univie.ac.at/\lower0.5ex\hbox{\~{}}kratt.}

\keywords{higher-dimensional magic matrices,
labelled combinatorial structures, multisort species, 
exponential principle}

\subjclass[2000]{Primary 05A15;
Secondary 05A16 05A19 05B15}

\thanks{$^\ddagger$Research partially supported by the Austrian
Science Foundation FWF, grants Z130-N13 and S9607-N13,
the latter in the framework of the National Research Network
``Analytic Combinatorics and Probabilistic Number Theory"}

\begin{abstract}
We provide exact and asymptotic formulae for the number of
unrestricted, respectively indecomposable, $d$-dimensional matrices
where the sum of all matrix entries with one coordinate fixed equals
$2$.
\end{abstract}

\maketitle

\section{Introduction}

We begin by recalling the notion of a {\it magic 
matrix}:\footnote{Strictly speaking, the correct term here would be
``$s$-semi-magic," since we do not require diagonals to sum up to the
same number as the rows and columns, see e.g.\ \cite{BCCGAA}. 
However, here and in what follows we prefer the term ``magic" 
for the sake of brevity.} 
this is a square matrix $m=(m_{i,j})_{1\le i,j\le n}$ with
non-negative integral entries such that all row and column sums are 
equal to
the same non-negative integer. If this non-negative integer is $s$,
then we call such a matrix {\it$s$-magic}.
The enumeration of $s$-magic squares has a long history, going back
at least to
MacMahon \cite[\S404--419]{MacMAA}. A good account of the enumerative theory
of magic squares can be found in 
\cite[Sec.~4.6]{StanAP}, with many pointers to further literature.
For more recent work, see for instance \cite{BCCGAA,LoLYAA}.

Let $[n]$ denote the standard $n$-set $\{1,2,\dots,n\}$.
There are two obvious ways of generalising $s$-magic matrices to higher
dimensions:
\begin{enumerate}
\item[(G1)] {\it All line sums are equal.} Given a positive integer $d$,
a $d$-dimensional matrix $m:[n]^d\to \mathbb N_0$ (where $\mathbb N_0$
denotes the set of non-negative integers) is called {\it $s$-magic}
if
\begin{equation} \label{eq:magic1}
\sum_{\omega_i\in[n]}
m(\omega_1,\omega_2,\dots,\omega_d)=s
\end{equation}
for all fixed $\omega_1,\dots,\omega_{i-1},
\omega_{i+1},\dots,\omega_{d}\in[n]$, and all $i=1,2,\dots,d$.
\item[(G2)] {\it All hyperplane sums are equal.} Given a positive integer $d$,
a $d$-dimensional matrix $m:[n]^d\to \mathbb N_0$ is called {\it $s$-magic}
if
\begin{equation} \label{eq:magic2}
\sum_{\omega_1,\dots,\omega_{i-1},
\omega_{i+1},\dots,\omega_{d}\in[n]}
m(\omega_1,\omega_2,\dots,\omega_d)=s
\end{equation}
for all fixed $\omega_i\in[n]$, and all $i=1,2,\dots,d$.
\end{enumerate}

Generalisation (G1) appears already in the literature, 
see e.g.\ \cite{AhLHAA,BCCGAA}.
For $d=3$ and $s=1$, these objects are equivalent to Latin squares counted
up to isotopy: the roles of rows, columns, and symbols of the
corresponding Latin square are played by the first, second, and third 
coordinate, respectively, and the entry in position
$(\omega_1,\omega_2)$ of the Latin square is $\omega_3$ if and
only if $m(\omega_1,\omega_2,\omega_3)=1$.


Generalisation~(G2) appears in the literature (in more general form) as
\emph{contingency tables} in statistics; there are Markov chain methods
for approximate counting of these, as well as some remarkable asymptotic
estimates, see~\cite{n1,n3,n4,n5,n6}. Indeed, these results suggest
that the counting problem for (G2) is much easier than for (G1).
(We are grateful to a referee for this information and the references.)

The present note focusses on the second generalisation. Hence, from
now on, whenever we use the term ``$s$-magic," this is understood in the
sense of (G2). 

Counting higher-dimensional magic matrices is made more difficult
(than the already difficult case of $2$-dimensional magic matrices)
by the fact that the analogue of Birkhoff's Theorem 
(cf.\ \cite{Birkhoff} or 
\cite[Corollary~8.40]{Aig}; it says that any $2$-dimensional
$s$-magic matrix can be decomposed in a sum of permutation matrices,
that is, $1$-magic matrices) fails for
them. For example, the $3$-dimensional $2$-magic matrix with ones
in positions $(1,1,1)$, $(1,2,3)$, $(2,1,2)$, $(2,2,1)$, $(3,3,2)$
and $(3,3,3)$ is not the sum of two $1$-magic matrices. 

As we demonstrate in this note, it is however
possible to count the $2$-magic matrices of any dimension.
Our first result is a recurrence relation for the number $u_n(d)$
of indecomposable $d$-dimensional $2$-magic matrices of size~$n$
(see Corollary~\ref{coro} in Section~\ref{sec:rek}). 
This recurrence is used in Proposition~\ref{Prop:HighDimAsymp} 
to derive, for fixed $d\ge3$, an asymptotic formula for
$u_n(d)$. In order to go from indecomposable matrices to unrestricted ones,
we observe that the 
$d$-dimensional $2$-magic matrices may be viewed as a $d$-sort
species in the sense of Joyal \cite{Joyal} which obeys the
($d$-sort) exponential principle. 
Let $w_n(d)$ denote the number of {\it all\/} $d$-dimensional 
$2$-magic matrices of size~$n$. The exponential principle can then be
applied  to relate the numbers $w_n(d)$ to the numbers $u_n(d)$, see
\eqref{eq:wugf} (for $d=2$) and \eqref{eq:wnun} (for $d\ge2$).
This relation is used in Theorem~\ref{Th:HighDimAsymp} to find, for fixed
$d\ge3$, an asymptotic estimate for the numbers $w_n(d)$ as well. 
Exact and asymptotic formulae for
$u_n(d)$ and $w_n(d)$ for $d=2$ are presented in
Section~\ref{sec:d=2}. 
We remark in passing that a simple counting argument shows that
the obvious interpretation of the matrices
in Generalisation~(G1) as a $d$-sort species does {\it not\/} satisfy the
exponential principle, not even under the --- in a sense --- minimal
axiomatics of \cite{CaKMAA}.

\section{Indecomposable $2$-magic matrices and fixed-point-free involutions}
\label{sec:vn}

A $d$-dimensional matrix $m:[n]^d\to\mathbb N_0$ is called {\it
decomposable}, if there exist non-empty subsets
$B_1^{(1)},B_2^{(1)},B_1^{(2)},B_2^{(2)},\dots,B_1^{(d)},B_2^{(d)}$
of $[n]$ with
$$
B_1^{(1)}\amalg B_2^{(1)}=B_1^{(2)}\amalg B_2^{(2)}=\dots=
B_1^{(d)}\amalg B_2^{(d)}=[n]
$$
($\amalg$ denoting disjoint union) and
$$
\vert B_1^{(1)}\vert=\vert B_1^{(2)}\vert=\dots=\vert B_1^{(d)}\vert,
$$
such that $m(\om_1,\om_2,\dots,\om_d)\ne0$ only if either
$$(\om_1,\om_2,\dots,\om_d)
\in B_1^{(1)}\times B_1^{(2)}\times\dots\times B_1^{(d)}$$
or
$$(\om_1,\om_2,\dots,\om_d)
\in B_2^{(1)}\times B_2^{(2)}\times\dots\times B_2^{(d)},$$
otherwise it is called {\it indecomposable}.\footnote{We warn 
the reader that for $d=2$ this does not reduce to
the notion of decomposability of matrices in linear algebra
since there rows and columns are reordered by the {\it same}
permutation. Yet another definition of indecomposability occurs
in~\cite{AhLHAA}.}
(In less formal language: there exist reorderings of the lines of
the matrix such that $m$ attains a block form.) The integer $n$ is
called the {\it size} of $m$. 

Let $u_n(d)$ denote the number of indecomposable $d$-dimensional
$2$-magic matrices of size $n$. Note that
an indecomposable $2$-magic matrix with an entry~$2$ has size~$1$.
So it is enough to consider zero-one matrices.

The purpose of this section is to relate the numbers $u_n(d)$ to
another sequence of numbers $v_n(d)$ counting certain tuples of
fixed-point-free involutions on a set with $2n$ elements.
More precisely, let 
\begin{equation} \label{eq:t_1}
t_1=(1,2)(3,4)\cdots(2n-1,2n)
\end{equation}
be the standard fixed-point-free involution on the set $[2n]$.
Then we define $v_n(d)$ to be the number of choices of $d-1$
fixed-point-free involutions $t_2, \ldots, t_d$ on
$[2n]$ such that the group $G=\langle t_1, t_2, \ldots,
t_d\rangle$ generated by $t_1,t_2,\dots,t_d$ is transitive. 
(For example, when $n=2$, there are just three fixed-point-free
involutions on $\{1,2,3,4\}$, viz., $(1,2)(3,4)$, $(1,3)(2,4)$ and
$(1,4)(2,3)$, any two of which generate a transitive group. So
$v_2(d)=3^{d-1}-1$.)

We have the following relation.

\begin{lemma} \label{lem:unvn}
For all integers $n,d>1$, we have
\begin{equation} \label{eq:unvn}
u_n(d) =2^{-n}(n!)^{d-1}v_n(d).
\end{equation}
\end{lemma}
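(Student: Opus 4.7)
The plan is a double count of pairs $(m,\phi)$, where $m$ is an indecomposable $d$-dimensional $2$-magic matrix of size $n$ and $\phi\colon[2n]\to\operatorname{supp}(m)$ is a bijection satisfying $\phi(2k-1)_1=\phi(2k)_1=k$ for each $k\in[n]$. Since every indecomposable $2$-magic matrix with $n\geq 2$ has entries in $\{0,1\}$ (as noted just before the lemma), the support has exactly $2n$ points and each first-coordinate fibre has size $2$, so such $\phi$ do exist.

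On the one hand, counting with $m$ fixed, the restriction of $\phi$ to $\{2k-1,2k\}$ is an arbitrary bijection to the two-element fibre $\{s\in\operatorname{supp}(m):s_1=k\}$ and contributes a factor of $2$; the total count is therefore $u_n(d)\cdot 2^n$. On the other hand, given $(m,\phi)$, define for $i=2,\dots,d$ the fixed-point-free involution $t_i$ on $[2n]$ by $t_i(j)=j'$ iff $j\neq j'$ and $\phi(j),\phi(j')$ have equal $i$-th coordinate (such a $j'$ is unique since each $i$-coordinate fibre has cardinality $2$). The first-coordinate constraint on $\phi$ forces the first-coordinate equivalence to pull back to the standard $t_1=(1,2)(3,4)\cdots(2n-1,2n)$. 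Conversely, given $(t_2,\dots,t_d)$ enumerated by $v_n(d)$ together with arbitrary bijections $c_i\colon[2n]/\langle t_i\rangle\to[n]$ for $i=2,\dots,d$, set
\[
\phi(j):=\bigl(\lceil j/2\rceil,\,c_2(\{j,t_2(j)\}),\,\dots,\,c_d(\{j,t_d(j)\})\bigr)
\]
and let $m$ be the indicator function of $\phi([2n])$. This produces $v_n(d)\cdot(n!)^{d-1}$ pairs, and equating the two counts yields \eqref{eq:unvn}.

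The main obstacle is to verify that, under this correspondence, indecomposability of $m$ is equivalent to transitivity of $G:=\langle t_1,\dots,t_d\rangle$, and that the $\phi$ produced in the inverse direction is automatically injective. For the forward direction, a block decomposition $B_1^{(i)}\amalg B_2^{(i)}=[n]$ of $m$ pulls back via $\phi$ to a non-trivial partition of $[2n]$ invariant under every $t_i$, which precludes transitivity. Conversely, a proper non-empty $G$-invariant subset $B\subsetneq[2n]$ gives rise to sets $B_1^{(i)}:=\{\phi(j)_i:j\in B\}$ and $B_2^{(i)}:=[n]\setminus B_1^{(i)}$; the key observation is that each coordinate value has exactly two preimages in $\operatorname{supp}(m)$ forming a single $t_i$-orbit, hence both lying in $B$ or both in $[2n]\setminus B$. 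This forces $B_1^{(i)}\cap B_2^{(i)}=\emptyset$ and $|B_1^{(i)}|=|B|/2$ uniformly in $i$, yielding the required block decomposition. The same orbit observation handles injectivity: if $\phi(j)=\phi(j')$ with $j\neq j'$, then $\{j,j'\}$ is $t_i$-invariant for every $i$, hence $G$-invariant, contradicting transitivity whenever $n\geq 2$.
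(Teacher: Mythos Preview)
Your proof is correct and follows essentially the same double-counting strategy as the paper: both arguments count pairs consisting of an indecomposable matrix together with a labeling of its $2n$ ones compatible with the first coordinate, obtaining $2^n u_n(d)$ from the matrix side and $(n!)^{d-1}v_n(d)$ from the involution side. Your formalization via the bijection $\phi$ and the orbit-labelings $c_i$ is exactly the paper's ``numbering of positions'' and ``numbering of cycles'', and you are in fact a bit more careful than the paper in explicitly verifying that the reconstructed map $\phi$ is injective when $G$ is transitive.
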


\begin{proof} 
Let $m$ be an indecomposable $d$-dimensional $2$-magic
matrix of size~$n$, where $n>1$. Then $m$ is a zero-one matrix,
and
it contains $2n$ entries equal to~$1$, the rest being zero. Number
the positions of the $1$'s in $m$ from $1$ to $2n$ in such a way
that the positions with first coordinate $j$ are numbers $2j-1$
and $2j$ for $j=1,\ldots,n$. (There are $2^n$ ways to do this, since
for each $j$ we can choose arbitrarily which of the two $1$'s has number
$j-1$.) Then, for $i=1,\ldots,d$, let $t_i$ be the fixed-point-free
involution whose cycles are the pairs of numbers in
$\{1,\ldots,2n\}$ indexing positions of $1$'s with the same $i$-th
coordinate. Note that $t_1$ is the involution defined in \eqref{eq:t_1}.

We claim that the subgroup $G$ of $S_{2n}$ generated by $t_1,\ldots,t_d$
is transitive if and only if the matrix $m$ is indecomposable. For this,
note that the $1$'s whose labels belong to a cycle of $t_i$ have the same
$i$-th coordinate. So, if $m$ is decomposable, and the $1$ with label
$1$ belongs to $B_1^{(1)}\times\cdots\times B_1^{(d)}$, then
an easy induction shows that any $1$ whose label is in the same orbit
belongs to this set, so that $G$ is intransitive. Conversely, if $G$ is
intransitive, then the coordinates of the $1$'s whose labels belong to a
$G$-orbit give rise to a decomposition of $m$. 

So each matrix gives rise to $2^n$ such $d$-tuples of involutions. Thus,
the number of pairs consisting of a matrix and a corresponding
sequence of permutations is $2^n\,u_n(d)$.

For instance, the example of a matrix failing the analogue of
Birkhoff's Theorem given in the Introduction, 
with the entries numbered in the
order given, produces the three permutations $(1,2)(3,4)(5,6)$,
$(1,3)(2,4)(5,6)$ and $(1,4)(2,6)(3,5)$.

Conversely, let $t_1, \ldots, t_d$ be fixed-point-free involutions
on the set $\{1, \ldots, 2n\}$ which generate a transitive group,
where $t_1$ is the standard involution defined in \eqref{eq:t_1}. Number the
cycles of each $t_i$ from $1$ to $n$ such that the cycle
$(2j-1,2j)$ of $t_1$ has number~$j$. (There are $(n!)^{d-1}$ such
numberings.) Now construct a $d$-dimensional matrix $m$ as
follows: for $k=1,\ldots,2n$, if $k$ lies in cycle number~$\om_i$
of~$t_i$, then $m(\om_1,\om_2,\ldots,\om_d)=1$; all other entries are
zero. Then $m$ is $2$-magic. Consequently, each sequence of
permutations gives rise to $(n!)^{d-1}$ matrices; and the number
of pairs consisting of a matrix and a corresponding sequence of
permutations equals $(n!)^{d-1}\,v_n(d)$.

Comparing these two expressions, we obtain
\eqref{eq:unvn}, as required.
\end{proof}

\begin{remark}
We note that $u_1(d)=v_1(d)=1$ for all $d$. Hence,
Formula~\eqref{eq:unvn} is false for $n=1$.
\end{remark}

\section{Computation of $u_n(2)$ and $w_n(2)$}
\label{sec:d=2}

The number $w_n(2)$ of $2$-dimensional $2$-magic matrices of size $n$ has 
been addressed earlier by Anand, Dumir and Gupta in \cite[Sec.~8.1]{ADG}.
They found the generating function formula
\begin{equation} \label{eq:wgf}
\sum _{n\ge0} ^{}w_n(2)\frac {z^n} {(n!)^2}=
(1-z)^{-1/2}e^{z/2}.
\end{equation}
This gives the explicit formula
\begin{equation} \label{eq:wexpl}
w_n(2)=\sum _{k=0} ^{n}\binom {2k}k \frac {(n!)^2} {2^{n-k}(n-k)!}.
\end{equation}
Singularity analysis (cf.\ \cite[Ch.~VI]{FlSeAA}) applied to
\eqref{eq:wgf} then yields the asymptotic formula
\begin{equation} \label{eq:wasy}
w_n(2)=(n!)^2\sqrt{\frac {e} {\pi n}}
\left(1+\mathcal O\left(\frac {1} {n}\right)\right),
\quad \quad \text {as }n\to\infty.
\end{equation}

The number $u_n(2)$ of {\it indecomposable} $2$-dimensional $2$-magic 
matrices of size $n$ can also be computed explicitly. One way is
to observe that, by Birkhoff's Theorem (cf.\ \cite{Birkhoff} or 
\cite[Corollary~8.40]{Aig}), 
a $2$-magic matrix $m$ is the sum of two permutation
matrices, say $p_1$ and $p_2$. 
If $m$ is indecomposable, then the
pair $\{p_1,p_2\}$ is uniquely determined. Premultiplying by
$p_1^{-1}$, we obtain a situation where $p_1$ is the identity;
indecomposability forces $p_2$ to be the permutation matrix
corresponding to a cyclic permutation, since a cycle of $p_2$
not containing all points would provide a decomposition of $m$.
So there are $n!\,(n-1)!$
choices for $(p_1,p_2)$, and half this many choices for $m$
(assuming, as we may, that $n>1$). Note that this formula gives
half the correct number for $n=1$. So we have
\begin{equation} \label{eq:un}
u_n(2) = \begin{cases}
1, & \text{if }n=1,\\ \frac {1} {2}n!\,(n-1)!, & \text{if }n>1.
\end{cases}
\end{equation}
Alternatively, we may observe that $2$-dimensional $2$-magic matrices
may be seen as a $2$-sort species in the sense of Joyal 
\cite{Joyal} (see also \cite[Def.~4 on p.~102]{BLL}), with the row
indices and the column indices forming the two sets on which the
functor defining the species operates. Hence, by the exponential
principle for $2$-sort species \cite[Prop.~20]{Joyal} (see also 
\cite[Sec.~2.4]{BLL}), 
we have
\begin{equation} \label{eq:wugf}
\sum _{n\ge0} ^{}w_n(2)\frac {z^n} {(n!)^2}=
\exp\left(\sum _{n\ge1} ^{}u_n(2)\frac {z^n} {(n!)^2}
\right).
\end{equation}
Combining this with \eqref{eq:wgf}, we find that
$$
\sum _{n\ge1} ^{}u_n(2)\frac {z^n} {(n!)^2}=
\frac {z} {2}+\frac {1} {2}\log\left(\frac {1} {1-z}\right).
$$
Extraction of the coefficient of $z^n$ then leads (again) to \eqref{eq:un}.

\section{A recurrence relation for $v_n(d)$}
\label{sec:rek}

In this section we prove a recurrence relation for the numbers $v_n(d)$
(see Section~\ref{sec:vn} for their definition). By
Lemma~\ref{lem:unvn}, this affords as well a recurrence relation for
the numbers $u_n(d)$.

\begin{proposition} \label{prop:vrek}
The numbers $v_n(d)$ satisfy $v_1(d)=1$ and
\begin{equation} \label{eq:vrek}
\sum_{k=1}^n{\binom {n-1}{ k-1}}\,((2n-2k-1)!!)^{d-1}\,v_k(d)  =
((2n-1)!!)^{d-1},\quad n>1.
\end{equation}
Here, $(2n-1)!!=1\cdot3\cdot5\cdots(2n-1)$ is the product of the
first $n$ odd positive integers for $n>0$, and, by convention,
$(-1)!!=1$.
\end{proposition}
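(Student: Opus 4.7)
The plan is to prove the recurrence by double-counting the total number of $(d-1)$-tuples $(t_2,\dots,t_d)$ of fixed-point-free involutions on $[2n]$ (with no transitivity constraint imposed). Since the number of fixed-point-free involutions on a set of $2m$ elements is $(2m-1)!!$, this total equals $((2n-1)!!)^{d-1}$, which is the right-hand side of \eqref{eq:vrek}. The left-hand side will arise by classifying these tuples according to the orbit of the element $1$ under $G=\langle t_1,t_2,\dots,t_d\rangle$.

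Let $A$ denote this orbit. Because $A$ is $G$-invariant, it is in particular $t_1$-invariant, and since $t_1=(1,2)(3,4)\cdots(2n-1,2n)$, this forces $A$ to be a union of the pairs $\{2j-1,2j\}$; together with $1\in A$ this gives $\{1,2\}\subseteq A$. Writing $|A|=2k$, the set $A$ is determined by a choice of $k-1$ of the remaining $n-1$ pairs, so there are exactly $\binom{n-1}{k-1}$ possibilities. Having fixed $A$, each $t_i$ with $i\ge 2$ preserves both $A$ and its complement, so it splits as a disjoint union of a fixed-point-free involution on $A$ and one on $[2n]\setminus A$.

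The orbit of $1$ equals $A$ (rather than a proper subset) precisely when $\langle t_1|_A,t_2|_A,\dots,t_d|_A\rangle$ is transitive on $A$. Under the order-preserving identification $A\cong[2k]$, $t_1|_A$ maps to the standard involution on $[2k]$, so by definition the number of tuples $(t_2|_A,\dots,t_d|_A)$ giving transitivity is $v_k(d)$. On the complement, of size $2(n-k)$, no condition is imposed, so the restrictions of $t_2,\dots,t_d$ contribute an independent factor of $((2n-2k-1)!!)^{d-1}$ (the convention $(-1)!!=1$ covering $k=n$). Summing over $k$ and over the $\binom{n-1}{k-1}$ choices of $A$ yields \eqref{eq:vrek}; the base case $v_1(d)=1$ is immediate since on $[2]$ the only fixed-point-free involution is $(1,2)$, which alone generates a transitive group. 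There is no substantial obstacle; the one point worth highlighting is that it is $t_1$-invariance (not mere parity of $|A|$) that restricts $A$ to unions of the pairs $\{2j-1,2j\}$, and this is what produces the binomial coefficient and the asymmetric role of $t_1$ in the recurrence.
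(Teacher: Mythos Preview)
Your proof is correct and follows essentially the same approach as the paper: both arguments double-count the $(d-1)$-tuples $(t_2,\dots,t_d)$ of fixed-point-free involutions on $[2n]$ by classifying them according to the $G$-orbit of the element~$1$, obtaining the binomial coefficient from the choice of $t_1$-cycles in that orbit, the factor $v_k(d)$ from transitivity on the orbit, and the factor $((2n-2k-1)!!)^{d-1}$ from the unconstrained complement. If anything, your write-up is slightly more explicit about why the orbit must be a union of $t_1$-cycles and why each $t_i$ splits along it.
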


\begin{proof} Recall that $(2n-1)!!$ is the number of
fixed-point-free involutions on a set of size~$2n$. 
(This is a special case of the general formula
\[\frac{n!}{\prod_{i=1}^ni^{a_i}a_i!}\]
for the number of permutations in $S_n$ with $a_i$ cycles of length $i$ for
$i=1,\ldots,n$.) The number of choices of
involutions $t_1,t_2,\dots,t_d$, where $t_1$ is as in \eqref{eq:t_1},
such that the orbit containing~$1$ of the group they generate
has size $2k$ is\[
{\binom {n-1}{ k-1}}\,((2n-2k-1)!!)^{d-1}\,v_k(d),
\]
since we can choose in order
\begin{enumerate}
\item[(i)] $k-1$ of the $n-1$ cycles of $t_1$ other than $(1,2)$ 
such that the elements not fixed by all of these $k-1$ transpositions
together with $\{1,2\}$ form the desired orbit, $O$ say;
\item[(ii)] $d-1$ fixed-point-free involutions on $O$ which, together
with the restriction of $t_1$
to $O$, generate a transitive group;
\item[(iii)] $d-1$ arbitrary fixed-point-free involutions on the complement of
$O$.
\end{enumerate}
Summing these values shows that the numbers $v_n(d)$ satisfy the
desired recurrence.
\end{proof}

\begin{corollary} \label{coro}
For all integers $d>1$, the numbers $u_n(d)$ satisfy $u_1(d)=1$ and
\begin{multline*}
((2n-3)!!)^{d-1}+
\sum_{k=2}^n{\binom {n-1}{ k-1}}\,\left(\frac 
{(2n-2k-1)!!} {k!}\right)^{d-1}2^k\,u_k(d)  =
((2n-1)!!)^{d-1},\\\quad n>1.
\end{multline*}
\end{corollary}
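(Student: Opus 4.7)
The plan is to obtain the claimed recurrence by mechanically substituting the relation from Lemma~\ref{lem:unvn} into the recurrence \eqref{eq:vrek} of Proposition~\ref{prop:vrek}, with the only subtlety being that Lemma~\ref{lem:unvn} does not apply at $n=1$ and therefore the $k=1$ term of the sum must be treated separately.

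Concretely, I would start from the identity
\begin{equation*}
\sum_{k=1}^n\binom{n-1}{k-1}((2n-2k-1)!!)^{d-1}\,v_k(d) = ((2n-1)!!)^{d-1}
\end{equation*}
of Proposition~\ref{prop:vrek}, valid for $n>1$, and split off the $k=1$ summand. Since $v_1(d)=1$ (as noted in the Remark following Lemma~\ref{lem:unvn}) and $\binom{n-1}{0}=1$, this summand contributes exactly $((2n-3)!!)^{d-1}$, matching the first term on the left-hand side of the target identity. For the remaining indices $k\ge 2$, Lemma~\ref{lem:unvn} gives $v_k(d)=2^k(k!)^{-(d-1)}u_k(d)$, and substituting this yields
\begin{equation*}
\binom{n-1}{k-1}((2n-2k-1)!!)^{d-1}\cdot\frac{2^k}{(k!)^{d-1}}\,u_k(d)
=\binom{n-1}{k-1}\left(\frac{(2n-2k-1)!!}{k!}\right)^{d-1}2^k\,u_k(d),
\end{equation*}
which is precisely the general summand appearing in the statement.

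Finally, the base case $u_1(d)=1$ is immediate: a $1$-dimensional $1\times\cdots\times 1$ matrix has only one entry, which must equal $2$ to be $2$-magic, and such a single-entry matrix is trivially indecomposable.

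There is essentially no obstacle here; the only point requiring care is remembering that \eqref{eq:unvn} fails at $n=1$, so that the $k=1$ contribution cannot be absorbed into the sum and must be written out as the separate leading term $((2n-3)!!)^{d-1}$.
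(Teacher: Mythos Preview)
Your proposal is correct and is exactly the argument the paper has in mind: the corollary is stated without proof precisely because it follows by substituting $v_k(d)=2^k(k!)^{-(d-1)}u_k(d)$ from Lemma~\ref{lem:unvn} into \eqref{eq:vrek} for $k\ge2$ and peeling off the $k=1$ term using $v_1(d)=1$. One tiny slip: in the base case you wrote ``$1$-dimensional $1\times\cdots\times 1$ matrix'' where you mean ``$d$-dimensional size-$1$ matrix''; the paper already records $u_1(d)=1$ in the Remark after Lemma~\ref{lem:unvn}.
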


\begin{remarks}
(1)
In the case $d=2$, we have seen in \eqref{eq:un} that $u_n(2)=n! (n-1)!/2$
for $n>1$, so that
\[
v_n(2)=2^{n-1}\,(n-1)! = (2n-2)!!,
\]
where $(2n-2)!!$ is the product of the even integers up to $2n-2$
(with $0!!=1$ by convention). Substituting this in \eqref{eq:vrek},
we have proved the somewhat curious looking identity
\[
\sum_{k=1}^n{\binom {n-1}{ k-1}}\,(2n-2k-1)!!\,(2k-2)!!  =
(2n-1)!!
\]
for $n>1$.

We remark that this identity has an interpretation in terms of
hypergeometric functions, for which we refer to \cite{SlatAC}, in
particular, (1.7.7), Appendix~(III.4). The left-hand side is
$$2^{n-1}\,(1/2)_{n-1}
\cdot  {} _{2} F _{1} \!\left [ \begin{matrix} {  - n +1,1}\\ 
{ -n+ {\frac 1 2}}\end{matrix} ; {\displaystyle
      1}\right ],
$$
and the identity is an instance of the Chu--Vandermonde identity.

\medskip
(2)
For $d>2$, we have not been able to solve the recurrence
explicitly. However, it is easy to calculate terms in the
sequences, and we can describe their asymptotics (see
Sections~\ref{sec:uasy} and \ref{sec:wasy}).

Table~\ref{tab} gives counts of all indecomposable matrices, all
zero-one matrices, and all non-negative integer matrices, with
dimension~$d$ and hyperplane sums~$2$. The sequences for $d=2$ are
numbers A010796, A001499, and A000681 in the On-Line Encyclopedia
of Integer Sequences~\cite{oeis}. For $d=3$, they are A112578,
A112579 and A112580.

\begin{table}
\begin{tabular}{|c|c|rrrrrr|}
\hline $d$ && $n=1$ & $n=2$ & $n=3$ & $n=4$ & $n=5$ & $n=6$ \\
\hline
2 & \hbox{indec} &1 & 1 & 6 & 72 & 1440 & 43200 \\
  & \hbox{$0$-$1$} & 0 & 1 & 6 & 90 & 2040 & 67950 \\
  & \hbox{all} & 1 & 3 & 21 & 282 & 6210 & 202410 \\
\hline
3 & \hbox{indec} & 1 & 8 & 900 & 359424 & 370828800 & 820150272000 \\
  & \hbox{$0$-$1$} & 0 & 8 & 900 & 366336 & 378028800 & 833156928000  \\
  & \hbox{all} & 1 & 12 & 1152 & 431424 & 427723200 & 920031955200 \\
\hline
\end{tabular}
\vskip10pt
\caption{Indecomposable, zero-one and arbitrary $d$-dimensional $2$-magic
matrices of size~$n$}
\label{tab}
\end{table}
\end{remarks}

\section{Asymptotics of the numbers $u_n(d)$}
\label{sec:uasy}

This section provides the preparation for the determination of the
asymptotics of the numbers $w_n(d)$ for $d\ge3$ in the next section.
Our goal here is to establish an asymptotic estimate for the sequence
$u_n(d)$ with fixed $d\ge3$.

\begin{proposition}
\label{Prop:HighDimAsymp} For fixed $d\ge3$, 
we have
\[
u_n(d)\sim 2^{-dn}((2n)!)^{d-1},\quad \quad 
\text {as }n\to\infty.
\]
\end{proposition}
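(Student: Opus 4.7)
The plan is to prove $v_n(d)\sim ((2n-1)!!)^{d-1}$ as $n\to\infty$ and then convert this to the target asymptotic through Lemma~\ref{lem:unvn}, using the elementary identity $n!\,(2n-1)!! = (2n)!/2^n$ to obtain
\[
u_n(d) = 2^{-n}(n!)^{d-1}v_n(d) \sim 2^{-n}\bigl(n!\,(2n-1)!!\bigr)^{d-1} = 2^{-dn}((2n)!)^{d-1}.
\]

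To handle $v_n(d)$, I would rewrite the recurrence \eqref{eq:vrek} of Proposition~\ref{prop:vrek} as
\[
\frac{v_n(d)}{((2n-1)!!)^{d-1}} = 1 - \sum_{k=1}^{n-1}\binom{n-1}{k-1}\frac{((2n-2k-1)!!)^{d-1}\,v_k(d)}{((2n-1)!!)^{d-1}},
\]
and bound the sum. Using the trivial estimate $v_k(d) \leq ((2k-1)!!)^{d-1}$ (each of the $d-1$ involutions $t_2,\dots,t_d$ admits at most $(2k-1)!!$ choices) together with the identity
\[
\frac{(2n-2k-1)!!\,(2k-1)!!}{(2n-1)!!} = \binom{n}{k}/\binom{2n}{2k},
\]
which is immediate from $(2m-1)!! = (2m)!/(2^m m!)$, the problem reduces to showing
\[
E_n := \sum_{k=1}^{n-1}\binom{n-1}{k-1}\left(\frac{\binom{n}{k}}{\binom{2n}{2k}}\right)^{d-1} \longrightarrow 0\quad\text{as }n\to\infty.
\]

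The crucial ingredient is the Chu--Vandermonde inequality
\[
\binom{2n}{2k} = \sum_{j=0}^{2k}\binom{n}{j}\binom{n}{2k-j} \geq \binom{n}{k}^2,
\]
obtained by retaining only the $j=k$ summand. This yields $\binom{n}{k}/\binom{2n}{2k} \leq 1/\binom{n}{k}$, and combined with $\binom{n-1}{k-1} = (k/n)\binom{n}{k}$ gives the uniform bound
\[
E_n \leq \frac{1}{n}\sum_{k=1}^{n-1}k\,\binom{n}{k}^{-(d-2)} \leq \frac{1}{n}\sum_{k=1}^{n-1}\frac{k}{\binom{n}{k}}\quad (d\geq 3).
\]
The last sum is $O(1)$: the boundary terms $k=1$ and $k=n-1$ each contribute at most $1$, while for $2\leq k\leq n-2$ the unimodality of $\binom{n}{\cdot}$ gives $\binom{n}{k}\geq\binom{n}{2}$, so the middle block is bounded by $\binom{n}{2}^{-1}\sum_{k=2}^{n-2}k = O(1)$. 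Hence $E_n = O(1/n)$, so $v_n(d) = ((2n-1)!!)^{d-1}(1 + O(1/n))$, and the conclusion follows from the first paragraph.

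The only real obstacle is the uniform estimation of $E_n$; without the clean Chu--Vandermonde inequality $\binom{n}{k}^2\leq \binom{2n}{2k}$, one would be forced into a more delicate Stirling-based case analysis splitting $k$ into small, middle, and large ranges, as the individual asymptotics of $\binom{n}{k}/\binom{2n}{2k}$ behave quite differently in each regime.
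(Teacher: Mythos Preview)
Your proof is correct and follows the same overall architecture as the paper: reduce to $v_n(d)\sim((2n-1)!!)^{d-1}$ via Lemma~\ref{lem:unvn}, rewrite the recurrence of Proposition~\ref{prop:vrek} to isolate an error sum, bound each summand using the trivial estimate $v_k(d)\le((2k-1)!!)^{d-1}$, and show the error is $O(1/n)$.

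The difference lies in the key inequality controlling the ratio $(2k-1)!!\,(2n-2k-1)!!/(2n-1)!!$. The paper works with the Stirling-type two-sided bounds $\sqrt{2(n+1)}\le 2^n n!/(2n-1)!!\le 2\sqrt{n}$, which produce
\[
\frac{(2n-1)!!}{(2k-1)!!\,(2n-2k-1)!!}\ \ge\ \binom{n}{k}\left(\frac{(k+1)(n-k+1)}{n}\right)^{1/2},
\]
carrying an extra square-root factor through the estimate. Your route is cleaner: you recognise the ratio as $\binom{n}{k}/\binom{2n}{2k}$ and apply the single-term Vandermonde bound $\binom{2n}{2k}\ge\binom{n}{k}^2$, giving the neat $\binom{n}{k}/\binom{2n}{2k}\le 1/\binom{n}{k}$ with no auxiliary factors. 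Both approaches then finish the same way, bounding $\sum_{k}(k/n)\binom{n}{k}^{-(d-2)}$ by separating the boundary terms and using $\binom{n}{k}\ge\binom{n}{2}$ in the middle. Your inequality is the more elegant device here; the paper's version yields a slightly sharper constant but that is irrelevant for the $O(1/n)$ conclusion.
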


\begin{proof}
By Lemma~\ref{lem:unvn}, we have
$u_n(d)=(n!)^{d-1}v_n(d)/2^n$ for $n>1$, so it suffices to show that
\[
v_n(d)\sim((2n-1)!!)^{d-1}.
\]
We will use the estimates
\[
\sqrt{2(n+1)} \le \frac{2^n\,n!}{(2n-1)!!} \le 2\sqrt{n}
\]
for $n\ge1$. With $c_n=2^nn!/(2n-1)!!$, we have
$c_{n+1}/c_n=(2n+2)/(2n+1)$, and both inequalities are easily
proved by induction. From these estimates, we obtain the
inequality
\begin{equation} \label{eq:estimate}
\frac{(2n-1)!!}{(2k-1)!!\,(2n-2k-1)!!} \ge {\binom n
k}\left(\frac{(k+1)(n-k+1)}{n}\right)^{1/2}.
\end{equation}
To simplify our formulae, we denote the left-hand side of this
inequality by $\displaystyle{\dbin{n}{k}}$.

Now, by Proposition~\ref{eq:vrek}, $v_n(d)$ satisfies the recurrence
$$
v_n(d)=((2n-1)!!)^{d-1} - \sum_{k=1}^{n-1}{\binom {n-1}
{k-1}}((2n-2k-1)!!)^{d-1}v_k(d), \quad \quad n>1.
$$
Clearly $v_n(d)\le((2n-1)!!)^{d-1}$. We show that
$v_n(d)\ge((2n-1)!!)^{d-1}(1-O(1/n))$,
an estimate which, in view of the above recurrence, follows if
we can show that
\[
L:=\sum_{k=1}^{n-1}{\binom {n-1}
{k-1}}\dbin{n}{k}^{-(d-1)}=O\left(\frac{1}{n}\right).
\]

Using
\eqref{eq:estimate}, we have
\begin{align*}
L &\le 
\frac{n}{(2n-1)^{d-1}} 
+ \sum_{k=2}^{n-2}{\binom {n-1}
{k-1}}{\binom n k}^{-(d-1)}
\left(\frac{n}{(k+1)(n-k+1)}\right)^{(d-1)/2} \\ 
&\le
\frac{n}{(2n-1)^{d-1}} 
+ \sum_{k=2}^{n-2}\frac{k}{n}{\binom n
k}^{-(d-2)} \left(\frac{n}{(k+1)(n-k+1)}\right)^{(d-1)/2}.
\end{align*}
Since $k/n<1$,
$n/(k+1)(n-k+1)<1/2$, and ${\binom n
k}\ge{\binom n 2}$, and there are fewer than $n-1$ terms in the
sum, the second term is at most
$$n^{-(d-2)}(n-1)^{-(d-3)}\cdot 2^{d-2}\cdot 2^{-(d-1)/2}\le 
\frac {1} {n},$$
as required.
\end{proof}

\section{Asymptotics of the numbers $w_n(d)$}
\label{sec:wasy}

Recall that $w_n(d)$ and $u_n(d)$ are the numbers of unrestricted,
respectively indecomposable, $d$-dimensional $2$-magic matrices of
size $n$. Using the exponential principle, we can relate 
the sequence $(w_n(d))_{n\ge0}$ to the sequence $(u_n(d))_{n\ge0}$
for each fixed $d$, see \eqref{eq:wnun} below. This relationship combined with the
fact that the sequence $(u_n(d))_{n\ge0}$ grows sufficiently rapidly
for $d\ge3$ (Proposition~\ref{Prop:HighDimAsymp} says that it grows
very roughly like $((2n)!)^{d-1}$) allows us to conclude that, for
$d\ge3$, $w_n(d)$ and $u_n(d)$ grow at the same rate.

\begin{theorem}
\label{Th:HighDimAsymp} For fixed $d\ge3$, we have
\[
w_n(d)\sim 2^{-nd}((2n)!)^{d-1},\quad \quad 
\text {as }n\to\infty.
\]
\end{theorem}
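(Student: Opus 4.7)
The plan is to invoke the exponential principle \eqref{eq:wnun}, which (as the $d$-sort generalisation of \eqref{eq:wugf}) asserts that
\[
\sum_{n\ge 0} w_n(d)\frac{z^n}{(n!)^d} = \exp\!\left(\sum_{n\ge 1} u_n(d)\frac{z^n}{(n!)^d}\right),
\]
derive a convolution recurrence by differentiation, and then show that the correction to $u_n(d)$ coming from decomposable matrices is negligible for $d\ge 3$. From $W'(z)=U'(z)W(z)$ one extracts, after comparing coefficients of $z^{n-1}$ and rearranging,
\[
w_n(d) = u_n(d) + \sum_{k=1}^{n-1}\frac{k}{n}\binom{n}{k}^{d}\, u_k(d)\, w_{n-k}(d),
\]
so the task reduces to showing the sum on the right is $o(u_n(d))$.

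The key inputs are two estimates on $u_k(d)$. The upper bound $u_k(d) \le ((2k)!)^{d-1}/2^{kd}$ for $k\ge 2$ is immediate from Lemma~\ref{lem:unvn} together with $v_k(d) \le ((2k-1)!!)^{d-1}$ (the right-hand side being the total number of $(d-1)$-tuples of fixed-point-free involutions on $[2k]$, without any transitivity requirement). The matching asymptotic lower bound $u_n(d) \ge \tfrac{1}{2}\cdot 2^{-dn}((2n)!)^{d-1}$, valid for all sufficiently large $n$, is provided by Proposition~\ref{Prop:HighDimAsymp}. Together they furnish a constant $C=C(d)$ with
\[
\frac{\binom{n}{k}^{d}\, u_k(d)\, u_{n-k}(d)}{u_n(d)} \le C\,\frac{\binom{n}{k}^{d}}{\binom{2n}{2k}^{d-1}} \le C\,\binom{n}{k}^{-(d-2)},
\]
the last step using the elementary inequality $\binom{2n}{2k}\ge\binom{n}{k}^{2}$, obtained by embedding ordered pairs of $k$-subsets of $\{1,\ldots,n\}$ into $2k$-subsets of $[2n]=\{1,\ldots,n\}\sqcup\{n+1,\ldots,2n\}$.

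With these estimates in hand, I would prove by induction on $n$ that $w_n(d) \le R\, u_n(d)$ for some constant $R=R(d)$. Substituting the inductive hypothesis into the recurrence gives
\[
w_n(d) \le u_n(d)\left(1 + RC\sum_{k=1}^{n-1}\frac{k}{n}\binom{n}{k}^{-(d-2)}\right).
\]
For $d\ge 3$ the inner sum is $O(n^{-(d-2)})$: the term $k=n-1$ already contributes $\sim n^{-(d-2)}$, and every other term is smaller because $\binom{n}{k}\ge n$ for $1\le k\le n-1$ and grows much larger for $k$ away from the endpoints. Choosing $R$ large enough to absorb the finitely many small $n$ not yet in the asymptotic regime closes the induction; re-inserting the bound into the recurrence then gives the quantitative sharpening
\[
w_n(d) = u_n(d)\bigl(1+O(n^{-(d-2)})\bigr),
\]
and the theorem follows from Proposition~\ref{Prop:HighDimAsymp}.

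The main technical obstacle is the mildly circular character of the argument: controlling $w_n(d)$ via the recurrence requires an a priori bound on $w_m/u_m$ for $m<n$, which is essentially what one is trying to prove. The inductive bootstrap above resolves this; the crucial enabling ingredient is the elementary inequality $\binom{2n}{2k}\ge\binom{n}{k}^{2}$, which reduces the crude error estimate $\binom{n}{k}^{d}/\binom{2n}{2k}^{d-1}$ to a summable $\binom{n}{k}^{-(d-2)}$ as soon as $d\ge 3$.
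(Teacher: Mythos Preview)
Your argument is correct and follows essentially the same route as the paper's: derive the convolution recurrence \eqref{eq:wnun} from the exponential principle, then bootstrap an inductive bound on $w_m/u_m$ to show the error term is negligible. The one noteworthy difference is your key inequality: you use the clean combinatorial injection $\binom{2n}{2k}\ge\binom{n}{k}^2$, whereas the paper works with the quantity $\dbin{n}{k}=(2n-1)!!/((2k-1)!!\,(2n-2k-1)!!)=\binom{2n}{2k}/\binom{n}{k}$ and bounds it below via the analytic estimates $\sqrt{2(m+1)}\le 2^m m!/(2m-1)!!\le 2\sqrt{m}$, obtaining the slightly sharper $\dbin{n}{k}\ge\binom{n}{k}\sqrt{(k+1)(n-k+1)/n}$. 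Your weaker inequality already suffices and is arguably more transparent; the paper's refinement buys nothing essential here. Your sketch that $\sum_{k=1}^{n-1}\tfrac{k}{n}\binom{n}{k}^{-(d-2)}=O(n^{-(d-2)})$ is a little brisk for $d=3$ (merely $\binom{n}{k}\ge n$ gives $n-1$ terms of size $n^{-1}$), but splitting off $k\in\{1,n-1\}$ and using $\binom{n}{k}\ge\binom{n}{2}$ for the remaining terms completes it immediately, just as in the paper's proof of Proposition~\ref{Prop:HighDimAsymp}.
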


\begin{proof}
Generalising the argument at the end of Section~\ref{sec:d=2},
we observe that $d$-dimensional $2$-magic matrices
may be seen as a $d$-sort species in the sense of Joyal 
\cite{Joyal} (see also \cite[Def.~4 on p.~102]{BLL}), with the row
indices and the column indices forming the two set on which the
functor defining the species operates. Hence, by the exponential
principle for $d$-sort species \cite[Prop.~20]{Joyal} (see also 
\cite[Sec.~2.4]{BLL}), 
we have
$$
\sum _{n\ge0} ^{}w_n(d)\frac {z^n} {(n!)^d}=
\exp\left(\sum _{n\ge1} ^{}u_n(d)\frac {z^n} {(n!)^d}
\right).
$$
If we now differentiate both sides of this equation with respect to
$z$ and subsequently multiply both sides by $z$, then we obtain
\begin{align*}
\sum _{n\ge0} ^{}nw_n(d)\frac {z^n} {(n!)^d}
&=
\left(\sum _{n\ge1} ^{}nu_n(d)\frac {z^n} {(n!)^d}\right)
\exp\left(\sum _{n\ge1} ^{}u_n(d)\frac {z^n} {(n!)^d}\right)\\
&= 
\left(\sum _{n\ge1} ^{}nu_n(d)\frac {z^n} {(n!)^d}\right)
\left(\sum _{n\ge0} ^{}w_n(d)\frac {z^n} {(n!)^d}\right).
\end{align*}
Comparison of coefficients of $z^n$ on both sides then leads to the
relation
\begin{equation} \label{eq:wnun}
w_n(d)=u_n(d)+\sum_{k=1}^{n-1}\frac{k}{n}{\binom n
k}^du_k(d)w_{n-k}(d).
\end{equation}

As we said at the beginning of this section, our goal is to show that
$w_n(d)$ grows asymptotically at the same rate as $u_n(d)$. Hence,
putting $w_n(d)=u_n(d)+x_n(d)$, we have to show that $x_n(d)=o(u_n(d))$. We
assume inductively that 
$$x_m(d)\le 2^{-m}((2m-1)!!)^{d-1}(m!)^{d-1}$$
for all $m$ between $2$ and $n-1$;
the induction starts since we have $x_1(d)=x_2(d)=0$.

Now, using the inductive hypothesis with the recurrence relation
\eqref{eq:wnun},
we have
\begin{align*}
\frac{x_n(d)2^n}{((2n-1)!!)^{d-1}(n!)^{d-1}} &\le 2 \sum_{k=1}^{n-1}
\frac{k}{n} {\binom n k}^d \dbin{n}{k}^{-(d-1)}{\binom n
k}^{-(d-1)}
\\
&\le 2 \sum_{k=1}^{n-1} \frac{k}{n}{\binom n k}^{-(d-2)}
\left(\frac{n}{(k+1)(n-k+1)}\right)^{(d-1)/2}
\\
&\le (2^{1/2}n)^{-(d-3)},
\end{align*}
which establishes the result if $d>3$. For $d=3$, this inequality
gives the inductive step (that is, that the left-hand side is at
most~$1$); the fact that it is $o(1)$ for large~$n$ is proved by
an argument like that in the proof of
Proposition~\ref{Prop:HighDimAsymp}.
\end{proof}

\paragraph{Acknowledgment} The authors are grateful to a referee for
some helpful information on contingency tables.

\end{document}